\documentclass[12pt]{amsart}

\usepackage{amsmath,amssymb,amsthm}

\usepackage{booktabs}
\usepackage[letterpaper,margin=1in,ignoreall]{geometry}
\usepackage{hyperref}

\newtheorem{theorem}{Theorem}[section]
\newtheorem{lemma}[theorem]{Lemma}

\newtheorem{cor}[theorem]{Corollary}
\newtheorem*{apolaritylemma}{Apolarity Lemma}

\theoremstyle{remark}
\newtheorem{example}[theorem]{Example}
\newtheorem{remark}[theorem]{Remark}
\newtheorem{notation}[theorem]{Notation}

\renewcommand{\d}{\partial}
\newcommand{\C}{\mathbb{C}}
\newcommand{\PP}{\mathbb{P}}
\newcommand{\SL}{\mathrm{SL}}
\newcommand{\SO}{\mathrm{SO}}

\newcommand{\GL}{\mathrm{GL}}
\DeclareMathOperator{\Span}{span}
\DeclareMathOperator{\Diff}{Diff}
\DeclareMathOperator{\dett}{det}
\DeclareMathOperator{\pf}{pf}

\DeclareMathOperator{\sdet}{sdet}

\newcommand{\wtV}{\widetilde{\mathbb V}}
\newcommand{\VV}{{\mathbb V}}
\newcommand{\wtH}{\widetilde{H}}

\newcommand{\defining}[1]{\textbf{#1}}

\title{Lower bound for ranks of invariant forms}

\date{\today}

\author{Harm Derksen}
\address{Harm Derksen \\
Department of Mathematics \\
University of Michigan \\
530 Church Street \\
Ann Arbor, MI 48109--1043}
\email{hderksen@umich.edu}

\author{Zach Teitler}
\address{Zach Teitler \\
Boise State University \\
Department of Mathematics \\
1910 University Drive \\
Boise, ID 83725--1555}
\email{zteitler@boisestate.edu}

\subjclass[2010]{15A21, 14N15}
\keywords{Waring rank, invariant polynomials.}

\thanks{The first author was supported by NSF grant DMS 1302032.}
\begin{document}
\begin{abstract}
We give a lower bound for the Waring rank and cactus rank of forms that are invariant under an action of a connected algebraic group.
We use this to improve the
Ranestad--Schreyer--Shafiei lower bounds 
for the Waring ranks and cactus ranks of determinants of generic matrices,
Pfaffians of generic skew-symmetric matrices, and determinants of generic symmetric matrices.
\end{abstract}
\maketitle

\section{Introduction}

Let $F \in \C[x_1,\dotsc,x_n]$ be a homogeneous form of degree $d$.
The \defining{Waring rank} of $F$, denoted $r(F)$, is the minimum number of terms
in an expression for $F$ as a linear combination of powers of linear forms:
\[
  F = \sum_{i=1}^r c_i \ell_i(x_1,\dotsc,x_n)^d , \qquad c_i \in \C.
\]
For example,
\[
  x_1 \dotsm x_n = \frac{1}{2^{n-1} n!} \sum_{\substack{\epsilon \in \{\pm1\}^n \\ \epsilon_1 = 1}}
    \left(\prod \epsilon_i\right) \left(\sum \epsilon_i x_i\right)^n,
\]
so $r(x_1 \dotsm x_n) \leq 2^{n-1}$.
In fact $r(x_1 \dotsm x_n) = 2^{n-1}$.
This is a consequence of a general lower bound for Waring rank shown by
Ranestad and Schreyer in  \cite{MR2842085}.
For forms that are invariant under a group action, we improve  the general Ranestad--Schreyer lower bound.

Power sum decompositions of this type and Waring ranks have been studied since the 19th century,
thanks to their connections to the number-theoretic Waring problem,
secant varieties in algebraic geometry,
interpolation and quadrature methods,
mixture models in statistics,
and more.
For comprehensive treatments, including history and applications, see
\cite{MR1735271,MR2865915,comonmour96,MR2447451}.

Unfortunately, Waring ranks are in general difficult to compute, and have been calculated for only
a few families of polynomials.
%
An interesting example of a form whose Waring rank is not yet known is the generic determinant,
\[
  \dett_n = \det \begin{pmatrix} x_{1,1} & \cdots & x_{1,n} \\ \vdots & & \vdots \\ x_{n,1} & \cdots & x_{n,n} \end{pmatrix},
\]
a form of degree $n$ in $n^2$ variables.
Because $\det_2$ is a quadratic form, its rank is known, namely $r(\det_2) = 4$, but $r(\det_n)$ is unknown for $n \geq 3$.
As $\det_n$ is a sum of $n!$ terms of the form $x_{1,i_1} \dotsm x_{n,i_n}$, each with rank $2^{n-1}$,
we have $r(\det_n) \leq 2^{n-1} n!$. (So $r(\det_3) \leq 24$.)
This was recently improved to $r(\det_n) \leq \left(\frac{5}{6}\right)^{\lfloor n/3 \rfloor} 2^{n-1} n!$
\cite[\textsection8]{Derksen:2013sf}. (So $r(\det_3) \leq 20$.)

Several lower bounds for Waring rank have been proposed.
The classical lower bound via Sylvester's catalecticants gives $r(\det_n) \geq \binom{n}{\lfloor n/2 \rfloor}^2$;
this gives $r(\det_3) \geq 9$, and asymptotically (via Stirling approximation) this lower bound grows as $2^n/\sqrt{\pi n/2}$.
In \cite{Landsberg:2009yq} it is shown that $r(\det_n) \geq \binom{n}{\lfloor n/2 \rfloor}^2 + n^2 - (\lfloor n/2 \rfloor + 1)^2$;
this gives $r(\det_3) \geq 14$, but this lower bound has the same asymptotic growth.
Most recently, Shafiei \cite{Shafiei:ud}, using the Ranestad--Schreyer lower bound \cite{MR2842085},
has shown $r(\det_n) \geq \frac{1}{2}\binom{2n}{n}$;
this gives $r(\det_3) \geq 10$, and asymptotically it grows as $4^n/(2\sqrt{\pi n})$.
(Shafiei also considers permanents, Pfaffians, symmetric determinants and permanents, etc.,
see \cite{Shafiei:ud,Shafiei:2013fk}.)

In this paper we show that $r(\det_n) \geq \binom{2n}{n} - \binom{2n-2}{n-1}$;
this gives $r(\det_3) \geq 14$ and asymptotically it is $\frac{3}{2}$ times the Ranestad--Schreyer--Shafiei bound.
This is an example of the main result of this paper,
a lower bound for Waring ranks of invariant forms under the action of a connected group.

For a polynomial $F$, let $\Diff(F)$ be the vector space spanned by the partial derivatives of $F$ of all orders.
A special case of our main result is the following.
\begin{theorem}
Let $G$ be a connected algebraic group and let $V$ be an irreducible representation of $G$.
Let $F \in S^d V$ be an invariant form of degree $d$,
that is, a form such that for every $g \in G$, ${}^g F = F$.
Fix a basis $x_1,\dotsc,x_n$ for $V$, write $F = F(x_1,\dotsc,x_n)$, and let $F' = \d F/\d x_1$.
Then $r(F) \geq \dim \Diff(F) - \dim \Diff(F')$.
\end{theorem}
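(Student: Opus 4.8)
The plan is to recast the inequality in terms of the apolar algebra and a Hilbert-function computation, and then to use the group action to move a minimal decomposition into general position relative to $\d_1 := \d/\d x_1 \in V^*$.

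Let $S = \C[\d_1,\dotsc,\d_n]$, acting on $\C[x_1,\dotsc,x_n]$ by differentiation, let $F^\perp = \{D \in S : DF = 0\}$ be the apolar ideal, and let $A = S/F^\perp$, the apolar Artinian Gorenstein algebra, which as an $S$-module is identified with $\Diff(F) = S\cdot F$ via $\bar D \mapsto DF$. Under this identification $\Diff(F') = \Diff(\d_1 F) = S\cdot\d_1 F$ is carried onto the ideal $\d_1 A$, so
\[
  \dim \Diff(F) - \dim \Diff(F') = \dim_\C A/\d_1 A .
\]
It therefore suffices to prove $r(F) \geq \dim_\C A/\d_1 A$; since the argument below uses only a finite apolar subscheme, the same reasoning with the cactus Apolarity Lemma also yields the cactus rank bound $\operatorname{cr}(F) \geq \dim_\C A/\d_1 A$. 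So fix a decomposition $F = \sum_{i=1}^r c_i \ell_i^d$ with $r = r(F)$, set $Z = \{[\ell_1],\dotsc,[\ell_r]\} \subseteq \PP(V)$, and let $I_Z \subseteq S$ be its saturated homogeneous ideal with $B = S/I_Z$. By the Apolarity Lemma $I_Z \subseteq F^\perp$, giving a graded surjection $B \twoheadrightarrow A$, hence a surjection $B/\d_1 B \twoheadrightarrow A/\d_1 A$; moreover $\dim_\C B_k = \deg Z \leq r$ for $k \gg 0$.

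The crux is to arrange that $\d_1$ is a nonzerodivisor on $B$ --- a priori it need not be, since some $\ell_i$ may lie on the hyperplane $\{\d_1 = 0\}$ --- and this is where the hypotheses enter. Because $F$ is $G$-invariant, $F^\perp$ is $G$-stable, so for every $g \in G$ the expression $F = {}^gF = \sum_i c_i({}^g\ell_i)^d$ is again an $r$-term decomposition; thus we may replace $Z$ by $g\cdot Z$ for any $g$. The set of $g$ for which $\d_1$ is a zerodivisor on $S/I_{g\cdot Z}$ is the closed subset $\{g \in G : {}^{g^{-1}}\d_1 \in \bigcup_{\mathfrak p \in \operatorname{Ass}(B)} \mathfrak p_1\}$, where $\mathfrak p_1 := \mathfrak p \cap V^*$ is a \emph{proper} subspace of $V^*$ because $I_Z$ is saturated (so the irrelevant ideal is not an associated prime). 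Since $V$, hence $V^*$, is irreducible, the nonzero orbit $G\cdot\d_1$ spans $V^*$; since $G$ is connected this orbit is irreducible, so it cannot be contained in the finite union $\bigcup_{\mathfrak p}\mathfrak p_1$ of proper subspaces. Hence the bad set is a proper closed subset of $G$, and for general $g$ the operator $\d_1$ is a nonzerodivisor on $S/I_{g\cdot Z}$; replacing $Z$ by such a translate, we may assume $\d_1$ is a nonzerodivisor on $B$.

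Finally, from the exact sequence $0 \to B(-1)\xrightarrow{\d_1} B \to B/\d_1 B \to 0$ we get $\dim_\C(B/\d_1 B)_k = \dim_\C B_k - \dim_\C B_{k-1}$ for all $k$, and summing this (eventually zero) telescoping series gives $\dim_\C B/\d_1 B = \deg Z \leq r(F)$. Composing with $B/\d_1 B \twoheadrightarrow A/\d_1 A$ yields $r(F) \geq \dim_\C A/\d_1 A = \dim\Diff(F) - \dim\Diff(F')$, and similarly for $\operatorname{cr}(F)$. I expect the main obstacle to be the nonzerodivisor step: one must see both that $G$-translation preserves apolarity to $F$ (immediate from $G$-stability of $F^\perp$) and that a general translate avoids all the hyperplanes $\mathfrak p_1$ --- and it is precisely here that connectedness of $G$ and irreducibility of $V$ are both indispensable, through the principle that an irreducible spanning orbit lies in no finite union of proper subspaces.
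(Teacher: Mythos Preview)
Your proof is correct and follows essentially the same strategy as the paper's proof of the more general Theorem~\ref{thm: invariant rank bound}: take a minimal apolar scheme $Z$, use the $G$-action together with irreducibility of $V^*$ and connectedness of $G$ to translate $Z$ so that the hyperplane $\{\d_1=0\}$ misses it, and then compute $\ell\big(S/(I_Z+\d_1)\big)=\deg Z$ and compare with $\ell\big(S/(F^\perp+\d_1)\big)=\dim\Diff(F)-\dim\Diff(F')$. Your phrasing in terms of ``$\d_1$ is a nonzerodivisor on $B$'' and the telescoping Hilbert-function sum is the algebraic counterpart of the paper's geometric statement that the affine cone $\wtV(I_Z)$ has no component in $\wtH$; likewise your associated-primes argument (an irreducible spanning orbit lies in no finite union of proper hyperplanes $\mathfrak p_1$) is equivalent to the paper's point-by-point argument that for each support point $p_i$ some $g_i$ moves it off $H$, with connectedness ensuring a common $g$.
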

For example, let $V^* \cong \C^{n^2}$ be the space of $n \times n$ matrices and let
$G = \SL_n \times \SL_n$ act on $V^*$ by left and (inverted) right matrix multiplication.
Then $G$ is connected and $V^*$ is an irreducible representation, as is $V$.
Furthermore $\det_n$ is an invariant form, $\Diff \det_n$ is spanned by the minors of all sizes of the matrix,
and a first partial derivative of $\det_n$ is the determinant $\det_{n-1}$ of the complementary minor.
One can check that $\dim \Diff(\det_n) = \binom{2n}{n}$ and $\dim \Diff(\det_{n-1}) = \binom{2n-2}{n-1}$.
This shows that $r(\det_n) \geq \binom{2n}{n} - \binom{2n-2}{n-1}$.
For more examples, and more details about determinant, see Section~\ref{section: examples}.

Our main theorem, Theorem~\ref{thm: invariant rank bound}, loosens the requirement for $V$ to be an irreducible representation,
generalizes to an invariant subspace of forms instead of a single invariant form,
and actually gives a lower bound for cactus rank (defined below) instead of Waring rank.
Also the statement is made coordinate-free.

Section~\ref{section: preparation} contains some background and basic lemmas.
Our main results are in Section~\ref{section: main results}.
We give examples in Section~\ref{section: examples}.

\section{Preparation}\label{section: preparation}

We review some definitions and basic lemmas.

\subsection{Apolarity}

Let $V$ be a $\C$-vector space with basis $x_1,\dotsc,x_n$,
so $S(V)=\bigoplus_{d=0}^\infty S^dV \cong \C[x_1,\dotsc,x_n]$, where $S(V)$ denotes the symmetric algebra.
We introduce the \defining{dual ring} $S(V^*) \cong \C[\d_1,\dotsc,\d_n]$, where $\d_1,\dotsc,\d_n$
is the dual basis for $V^*$.
Then $S(V^*)$ acts on $S(V)$ by differentiation, where each $\d_i$ acts as $\d/\d x_i$.
This is the \defining{apolar pairing}.
For each degree $d \geq 0$, $S^d V^* \otimes S^d V \to \C$ is a perfect pairing;
for $e > d$, $S^e V^* \otimes S^d V \to 0$ and $S^d V^* \otimes S^e V \to S^{e-d} V$.

Let $F \in S(V)$ be a homogeneous form.
The \defining{apolar} or \defining{annihilating ideal}
$F^\perp \subseteq S(V^*)$ is $F^\perp = \{ \Theta \in S(V^*) \mid \Theta F = 0 \}$,
a homogeneous ideal.
The \defining{apolar algebra} is $A^F = S(V^*) / F^\perp$.
Note that $A^F \cong \Diff(F)$, as vector spaces.
In particular, the length $\ell(A^F)=\dim_{\C} A^F$ is equal to  $\dim \Diff(F)$.

One connection between Waring rank and apolarity is given by the following well-known lemma.
\begin{apolaritylemma}
Let $X \subseteq \PP V$ be a scheme with saturated homogeneous defining ideal $I_X$.
Let $\nu_d : \PP V \to \PP S^d V$ be the degree $d$ Veronese map, and let $[F] \in \PP S^d V$.
Then $[F]$ is in the linear span of $\nu_d(X)$ if and only if $I_X \subseteq F^\perp$.
\end{apolaritylemma}
Here the linear span of a scheme $Z$ is the smallest reduced linear subspace containing $Z$ as a subscheme.
Equivalently it is the linear subspace defined by the degree $1$ part of the ideal $I_Z$.
Note that if $Z = \nu_d(X)$ then the degree $1$ part of $I_Z$ is the degree $d$ part of $I_X$,
interpreted as equations of hyperplanes in $\PP S^d V$.

For proofs of the Apolarity Lemma see for example \cite[Theorem 5.3]{MR1735271}, \cite[\textsection1.3]{MR1780430},
\cite[\textsection4.1]{Teitler:2014gf}.

A scheme $X$ is called \defining{apolar to $F$} if $I_X \subseteq F^\perp$.

Suppose $X = \{[\ell_1],\dotsc,[\ell_r]\} \subseteq \PP V$ is a zero-dimensional reduced scheme.
Then $X$ is apolar to $F$ if and only if $[F]$ is in the linear span of $\nu_d(X) = \{[\ell_1^d],\dotsc,[\ell_r^d]\}$,
equivalently $F = c_1 \ell_1^d + \dotsb + c_r \ell_r^d$ for some scalars $c_i$.
Hence the Waring rank $r(F)$ is the least length of a zero-dimensional {\em reduced} apolar scheme to $F$.
This leads naturally to generalizations; we mention just two:
\begin{enumerate}
\item The \defining{cactus rank} of $F$, denoted $cr(F)$, is the least length of {\em any} zero-dimensional apolar scheme to $F$.
(The name ``cactus rank'' was introduced in \cite{MR2842085}.)
\item The \defining{smoothable rank} of $F$, denoted $sr(F)$,
is the least length of any zero-dimensional smoothable apolar scheme to $F$.
Recall that a scheme is smoothable if it is a flat limit of smooth schemes. (Note that for a scheme of dimension 0, the notions
reduced and smooth are the same.)
\end{enumerate}
Evidently $r(F) \geq sr(F) \geq cr(F)$.

%
%
%

\begin{remark}
Earlier terminology in Definitions 5.1 and 5.66 of \cite{MR1735271} is as follows.
An apolar scheme is also called an \defining{annihilating scheme},
cactus rank is also called \defining{scheme length},
and smoothable rank is also called \defining{smoothable scheme length}.
\end{remark}

\subsection{Lower bounds for rank}

We have remarked that $r(F) \geq sr(F) \geq cr(F)$.
We mention now some well-known lower bounds for rank, only so that we can make comparisons later on
with the lower bound in Theorem~\ref{thm: invariant rank bound}.

The \defining{Sylvester} lower bound for rank is:
\[
  cr(F) \geq \dim A^F_t,
\]
the dimension of the $t$-th graded piece of $A^F$,
for each $0 \leq t \leq d = \deg(F)$.

The \defining{Landsberg--Teitler} lower bound for rank is the following.
Assume that $F$ cannot be written using fewer variables
(that is, if $F \in S^d V'$ and $V' \subseteq V$ then $V' = V$).
Let $\Sigma_t \subseteq V^*$ be the set of points at which $F$ vanishes to order at least $t+1$.
It was shown in \cite{Landsberg:2009yq} that
\[
  r(F) \geq \dim A^F_t + \dim \Sigma_t.
\]

The \defining{Ranestad--Schreyer} lower bound for rank is the following.
Let $\delta$ be an integer such that the apolar ideal $F^\perp$ is generated in degrees less than or equal to $\delta$.
In \cite{MR2842085} it was proven that
\[
  cr(F) \geq \frac{1}{\delta} \ell(A^F).
\]

The simplest way to give an upper bound for Waring rank of a form $F$ is just to exhibit
an explicit expression for $F$ as a sum of powers.
However Bernardi and Ranestad gave an interesting upper bound for cactus rank, as follows.
Let $l$ be a linear form and let $F_l$ be a dehomogenization of $F$ with respect to $l$.
Let $\Diff(F_l) \subseteq S$ be the subspace of $S$ spanned by the partial derivatives of $F_l$ of all orders.
Then $cr(F) \leq \dim \Diff(F_l)$, see \cite[Theorem~1]{MR2996880}.
It was observed by Shafiei (and Pedro Marques) that $\dim \Diff(F_l) \leq \dim \Diff(F) = \ell(A^F)$,
see the remarks following Proposition~3.8 in \cite{Shafiei:ud}.

\subsection{Simultaneous Waring rank}

For a linear series $W \subseteq S^d V$, the \defining{simultaneous Waring rank} $r(W)$ is the least $r$ such that
there exist linear forms $\ell_1,\dotsc,\ell_r \in V$ with $W \subseteq \Span\{\ell_1^d,\dotsc,\ell_r^d\}$.
The \defining{apolar ideal} $W^\perp \subseteq S(V^*)$ is $W^\perp = \bigcap_{F \in W} F^\perp$.
There is an Apolarity Lemma for linear series:
for a scheme $Z \subseteq \PP V$ with vanishing ideal $I_Z\subseteq S(V^*)$, we have $\PP W \subseteq \Span(\nu_d(Z))$ if and only if $I_Z \subseteq W^\perp$.
As before, the simultaneous Waring rank $r(W)$ is the least length of a reduced zero-dimensional apolar scheme,
so we define the \defining{simultaneous smoothable rank} $sr(W)$ to be the least length of a smoothable zero-dimensional apolar scheme,
and the \defining{simultaneous cactus rank} $cr(W)$ to be the least length of a zero-dimensional apolar scheme.
Evidently $r(W) \geq sr(W) \geq cr(W)$.

The \defining{apolar algebra} is defined by $A^W = S(V^*)/W^\perp$.
Let $\Diff(W)$ be the vector subspace of $S(V)$ spanned by all the partial derivatives of all elements of $W$, of all orders.
That is, $\Diff(W) = \sum_{F \in W} \Diff(F)$.
As vector spaces, $A^W \cong \Diff(W)$.
In particular $\ell(A^W) = \dim \Diff(W)$.

Note that if $W = \C F$ is spanned by a single form then $r(W) = r(F)$, $sr(W) = sr(F)$, $cr(W) = cr(F)$,
and $\Diff(W) = \Diff(F)$.
Even though our goal (and main interest) is in providing lower bounds for ranks of single forms,
it turns out to be equally easy to prove the same lower bounds for simultaneous ranks of linear series;
the desired bounds for single forms follow as the special case where $\dim W = 1$.

Each of the lower bounds for rank listed above has an analogue for simultaneous rank.
See sections 2.2, 3.2-3, and 5.2 of \cite{Teitler:2014gf} for a detailed discussion.
Briefly, the Sylvester lower bound is
\[
  cr(W) \geq \dim A^W_t
\]
for each $0 \leq t \leq d$, where $W \subseteq S^d V$.
There is a generalization of the Landsberg--Teitler lower bound,
but it is more complicated and not worth stating here; see \cite[\textsection3.2-3]{Teitler:2014gf}.
The Ranestad--Schreyer lower bound is
\[
  cr(W) \geq \frac{1}{\delta} \ell(A^W),
\]
where $\delta$ is an integer such that $W^\perp$ is generated in degrees less than or equal to $\delta$.

\subsection{Preliminary results}

Here are some easy lemmas.

\begin{notation}
For a nonzero point $p$ in a vector space $W$ we write $[p]$ for the corresponding point in projective space $\PP W$.
Conversely, for a projective variety $X \subset \PP W$, we write $\widetilde{X}$ for the affine cone over $X$.
For a homogeneous ideal $I$, $\VV(I)$ is the projective variety or scheme defined by $I$
and $\wtV(I)$ is the affine variety or scheme defined by $I$.
\end{notation}

If $R$ is a graded ring and $M$ is a graded $R$-module, then $M(d)$ denotes the module $M$ with shifted grading:
$M(d)_e=M_{d+e}$. A homomorphism between graded modules should preserve the grading. If $I$ is an ideal of $R$ and $x\in R$, 
then the colon ideal $(I:x)$ is defined by $(I:x)=\{f\in R\mid fx\in I\}$.
\begin{lemma}
Let $R$ be a graded ring, $I \subseteq R$ a homogeneous ideal, $x \in R$ a homogeneous element of degree $d$.
Then the following is a short exact sequence of graded $R$-modules:
\[
  0 \to (R/(I:x))(-d) \overset{x}{\longrightarrow} R/I \to R/(I+(x)) \to 0.
\]
\end{lemma}
The proof is an easy exercise.

The next lemma is well-known, although usually stated only for the case of a single form ($\dim W = 1$).
\begin{lemma}\label{lemma: colon apolar ideal}
Let $W \subseteq S^d V$ and $\Theta \in S^e V^*$.
Let $\Theta W = \{ \Theta F \mid F \in W \} \subseteq S^{d-e} V$.
Then $(\Theta W)^\perp = (W^\perp : \Theta)$.
\end{lemma}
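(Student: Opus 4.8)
The plan is to prove this equality of homogeneous ideals of $S(V^*)$ by a direct chain of equivalences, unwinding both sides to the same condition on the apolar action. First I would recall that for any linear series $U \subseteq S^k V$ one has $U^\perp = \bigcap_{G \in U} G^\perp$ by definition; applying this to $U = \Theta W$ gives $(\Theta W)^\perp = \bigcap_{F \in W} (\Theta F)^\perp$. So it suffices to show, for a given $\Psi \in S(V^*)$, that $\Psi \Theta \in F^\perp$ for every $F \in W$ if and only if $\Psi \in (\Theta F)^\perp$ for every $F \in W$; indeed the left-hand condition is exactly the statement $\Psi \in (W^\perp : \Theta)$, and the right-hand condition is exactly $\Psi \in (\Theta W)^\perp$.

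The reduction above turns the lemma into the single pointwise statement that, for fixed $\Psi, \Theta \in S(V^*)$ and $F \in S(V)$, one has $(\Psi\Theta)F = 0$ if and only if $\Psi(\Theta F) = 0$. This follows immediately from the identity $(\Psi\Theta)\cdot F = \Psi\cdot(\Theta\cdot F)$, which I would state as the one substantive ingredient: it says precisely that the apolar pairing makes $S(V)$ a module over the commutative ring $S(V^*)$. Granting it, the equivalence $\Psi\Theta \in F^\perp \iff \Psi \in (\Theta F)^\perp$ is automatic, and intersecting over $F \in W$ yields $(\Theta W)^\perp = (W^\perp : \Theta)$ with no case analysis (in particular the degenerate case $\Theta W = 0$, where both sides equal all of $S(V^*)$, is subsumed).

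The only point requiring real justification — and the step I expect to be the ``main obstacle,'' though it is elementary — is that module identity. I would verify it on monomials: writing $\Theta = \partial^\alpha$, $\Psi = \partial^\beta$, $F = x^\gamma$ in multi-index notation, it amounts to $\partial^\beta(\partial^\alpha x^\gamma) = \partial^{\alpha+\beta} x^\gamma$, i.e. that iterated partial differentiation composes and that the order of the differentiations is immaterial (Clairaut's theorem for polynomials, which is purely formal here). One then extends by $\C$-bilinearity in all three arguments. With this in hand the proof is just the chain of equivalences assembled above.
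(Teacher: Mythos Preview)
Your proof is correct and follows exactly the same approach as the paper: both reduce to the equivalence $\Psi(\Theta F)=0 \iff \Psi\Theta\in F^\perp$ for all $F\in W$, which rests on the $S(V^*)$-module structure of $S(V)$. The paper's proof is a single line that takes that module identity for granted, whereas you spell it out on monomials; but there is no difference in strategy.
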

\begin{proof}
$\Psi \in (\Theta W)^\perp$ if and only if $\Psi(\Theta F) = 0$ for all $F \in W$, if and only if $\Psi\Theta \in W^\perp$.
\end{proof}

Combining these:
\begin{lemma}\label{lemma: length}
Let $W \subseteq S^d V$, $\Theta \in S^e V^*$, and $W' = \Theta W$.
Then we have a short exact sequence of graded $S(V^*)$-modules,
\[
  0 \to A^{W'}(-e) \overset{\Theta}{\longrightarrow} A^W \to S(V^*) / (W^\perp + \Theta) \to 0.
\]
In particular,
\[
  \ell(S(V^*) / (W^\perp + \Theta)) = \ell(A^W) - \ell(A^{W'}) = \dim \Diff(W) - \dim \Diff(W').
\]
\end{lemma}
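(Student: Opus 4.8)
The statement follows immediately by assembling the two preceding lemmas, so the "proof" is really a matter of bookkeeping. First I would apply Lemma~\ref{lemma: colon apolar ideal} with $W' = \Theta W$ to get $(W')^\perp = (W^\perp : \Theta)$. Then I would feed this into the short exact sequence of the first (unnamed) lemma, taking $R = S(V^*)$, $I = W^\perp$, and $x = \Theta$, a homogeneous element of degree $e$. That lemma produces
\[
  0 \to \bigl(S(V^*)/(W^\perp : \Theta)\bigr)(-e) \overset{\Theta}{\longrightarrow} S(V^*)/W^\perp \to S(V^*)/(W^\perp + (\Theta)) \to 0,
\]
and rewriting $S(V^*)/(W^\perp:\Theta) = S(V^*)/(W')^\perp = A^{W'}$ and $S(V^*)/W^\perp = A^W$ gives exactly the claimed short exact sequence
\[
  0 \to A^{W'}(-e) \overset{\Theta}{\longrightarrow} A^W \to S(V^*)/(W^\perp + \Theta) \to 0.
\]

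**The length count.** For the numerical consequence I would first observe that all three modules are finite-dimensional: $A^W$ has finite length because $W$ is a (nonzero) linear series of forms of positive degree $d$, so $W^\perp$ contains a power of the irrelevant ideal; the submodule $A^{W'}(-e)$ and the quotient are then finite-dimensional as well. A grading shift does not change total dimension, so $\ell(A^{W'}(-e)) = \ell(A^{W'})$. Additivity of length (equivalently, of $\dim_\C$) on a short exact sequence of finite-dimensional vector spaces then yields
\[
  \ell\bigl(S(V^*)/(W^\perp + \Theta)\bigr) = \ell(A^W) - \ell(A^{W'}).
\]
Finally, I would invoke the isomorphisms $A^W \cong \Diff(W)$ and $A^{W'} \cong \Diff(W')$ recorded in the subsection on simultaneous Waring rank to rewrite the right-hand side as $\dim \Diff(W) - \dim \Diff(W')$.

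**Where the difficulty lies.** Honestly there is no serious obstacle here; the content has been front-loaded into Lemma~\ref{lemma: colon apolar ideal} and the colon-ideal exact sequence, and the present lemma just combines them. The only point requiring a word of care is the finiteness claim that makes "additivity of length" legitimate—if one wanted to be scrupulous one could instead argue degree by degree, using that the sequence is exact in each graded piece and that each graded piece is a finite-dimensional $\C$-vector space, then sum over degrees. I would state the finiteness in one sentence and not belabor it.
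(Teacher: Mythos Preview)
Your proposal is correct and matches the paper's approach exactly: the paper introduces this lemma with the phrase ``Combining these:'' and gives no further proof, so the intended argument is precisely to plug Lemma~\ref{lemma: colon apolar ideal} into the colon-ideal short exact sequence, as you do. Your added remarks on finiteness and the degree-by-degree alternative are fine supplementary detail that the paper simply omits.
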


\section{Main Results}\label{section: main results}

Before giving our main result, we state the following simpler theorem, which does not involve a group action.

\begin{theorem}\label{thm: general derivative}
Let $W \subseteq S^d V$ be a linear series of $d$-forms. There exists a Zariski open dense subset $U$ of $V^*$ such that for 
$\d \in U$ (i.e., $\d$ in general position)  we have
\[
  r(W) \geq sr(W) \geq cr(W) \geq \dim \Diff(W) - \dim \Diff(W'),
\]
where $W'=\partial W=\{\d F\mid F\in W\}\subseteq S^{d-1}V$.
\end{theorem}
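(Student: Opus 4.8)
The plan is to use Lemma~\ref{lemma: length} to rephrase the desired inequality as a statement about a general hyperplane section of a minimal apolar scheme, and to combine this with a semicontinuity argument. First I would apply Lemma~\ref{lemma: length} with $e=1$ and $\Theta=\partial$: setting $h(\partial):=\ell(S(V^*)/(W^\perp+(\partial)))$, that lemma gives $h(\partial)=\dim\Diff(W)-\dim\Diff(W')$. Since $r(W)\ge sr(W)\ge cr(W)$ is already known, the theorem reduces to exhibiting a Zariski open dense $U\subseteq V^*$ with $cr(W)\ge h(\partial)$ for every $\partial\in U$.

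Next I would pin down $U$ by semicontinuity. As $S(V^*)/(W^\perp+(\partial))=A^W/\partial A^W$, we have $h(\partial)=\dim_\C A^W-\operatorname{rank}(m_\partial)$, where $m_\partial$ denotes multiplication by $\partial$ on the finite-dimensional algebra $A^W$; in a fixed basis the matrix of $m_\partial$ has entries linear in the coordinates of $\partial$, so $h$ is upper semicontinuous on $V^*$. I would then take $U$ to be the locus where $h$ attains its minimum value $h_{\min}$ --- equivalently, the locus where $\dim\Diff(\partial W)=\ell(A^W)-h(\partial)$ is maximal, which is a reasonable reading of ``$\partial$ in general position''. This $U$ is open (the complement of a determinantal locus) and dense in $V^*\cong\C^n$. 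On $U$ the claim becomes $cr(W)\ge h_{\min}$, and since $h_{\min}\le h(\partial)$ for every $\partial$, it suffices to produce a single $\partial$ with $cr(W)\ge h(\partial)$.

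To do this, I would choose a zero-dimensional apolar scheme $Z\subseteq\PP V$ to $W$ realizing $\ell(Z)=cr(W)$, take its saturated ideal $I_Z\subseteq W^\perp$, and cut by a general hyperplane. Saturatedness means $S(V^*)/I_Z$ has no associated prime equal to the irrelevant ideal; since this ring is one-dimensional (it is the coordinate ring of the affine cone $\wtV(I_Z)$ over the finite scheme $Z$), all its associated primes are among the finitely many one-dimensional primes, the ideals of the points of $Z$, so a general $\partial\in V^*$ lies in none of them and is a nonzerodivisor on $S(V^*)/I_Z$. For such $\partial$, the colon-ideal exact sequence recorded in Section~\ref{section: preparation} reads $0\to(S(V^*)/I_Z)(-1)\overset{\partial}{\longrightarrow}S(V^*)/I_Z\to S(V^*)/(I_Z+(\partial))\to0$, which on Hilbert functions gives $\dim_\C(S(V^*)/(I_Z+(\partial)))_t=h_Z(t)-h_Z(t-1)$ with $h_Z$ the Hilbert function of $Z$; summing over $t$ telescopes to $\lim_{t\to\infty}h_Z(t)=\ell(Z)=cr(W)$, so $S(V^*)/(I_Z+(\partial))$ has length $cr(W)$. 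Finally $I_Z\subseteq W^\perp$ gives a surjection $S(V^*)/(I_Z+(\partial))\twoheadrightarrow S(V^*)/(W^\perp+(\partial))$, whence $cr(W)=\ell(S(V^*)/(I_Z+(\partial)))\ge\ell(S(V^*)/(W^\perp+(\partial)))=h(\partial)\ge h_{\min}$.

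I expect the main obstacle to be the uniformity of $U$. The hyperplane-section step only yields the bound for $\partial$ in general position relative to the particular minimal apolar scheme $Z$, an open set that a priori depends on $Z$, whereas the theorem asks for one open set depending only on $W$. The semicontinuity step is precisely what repairs this: reinterpreting the right-hand side through the upper semicontinuous function $h(\partial)=\dim\Diff(W)-\dim\Diff(\partial W)$ and passing to its minimum locus upgrades the $Z$-dependent inequality to one valid on an open dense set intrinsic to $W$. A secondary point to get right is that $S(V^*)/(I_Z+(\partial))$ is genuinely finite-dimensional, i.e.\ that a general $\partial$ cuts the one-dimensional cone $\wtV(I_Z)$ in a zero-dimensional scheme, which is exactly the nonzerodivisor property supplied by saturatedness of $I_Z$.
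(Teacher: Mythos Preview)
Your proposal is correct and follows the same core idea as the paper's proof: fix a zero-dimensional apolar scheme $Z$ of minimal length, intersect its affine cone with a general hyperplane, and compare the resulting length with $\ell(S(V^*)/(W^\perp+(\partial)))$ via the surjection induced by $I_Z\subseteq W^\perp$, then invoke Lemma~\ref{lemma: length}.

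The one substantive difference is your treatment of the open set $U$. The paper simply says ``since $\partial$ is general, the hyperplane $H$ is disjoint from $Z$,'' which literally produces an open set depending on the chosen $Z$. You correctly flag that the theorem promises a $U$ intrinsic to $W$, and you repair this by observing that $h(\partial)=\ell(S(V^*)/(W^\perp+(\partial)))$ is upper semicontinuous in $\partial$ (via the rank of multiplication on the finite-dimensional algebra $A^W$), so its minimum locus is open dense and depends only on $W$; a single $\partial$ avoiding the support of $Z$ then certifies the inequality on all of that locus. This is a genuine tightening of the exposition rather than a new method: the paper's argument is not wrong, since any $\partial$ general relative to some minimal $Z$ already lies in your $U$ (because $h(\partial)\le cr(W)$ there, and $h\ge h_{\min}$ always), but the paper leaves this uniformity implicit. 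Your Hilbert-function telescoping and explicit use of saturatedness to get $\partial$ a nonzerodivisor on $S(V^*)/I_Z$ are likewise more detailed versions of the paper's geometric statement that $\wtV(I_Z)\cap\wtH$ has length $\ell(Z)$.
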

\begin{proof}
Let $Z \subseteq \PP V$ be a zero-dimensional apolar scheme to $W$ of length $r$, with defining ideal $I_Z \subseteq F^\perp$.
Since $\d$ is general, the hyperplane $H \subseteq \PP V$ defined by $\d$ is disjoint from $Z$.

Now the affine scheme $\wtV(I_Z) \subseteq V$ is one-dimensional and has no component
contained in the hyperplane $\wtH \subseteq V$.
So $\wtV(I_Z) \cap \wtH$ is supported only at the origin and has length equal to $\ell(Z) = r$.
That is, $\ell(S(V^*) / (I_Z + \d)) = r$.
Since $I_Z \subseteq W^\perp$ we have $\ell(S(V^*) / (W^\perp + \d)) \leq r$.
This shows that $r(W) \geq sr(W) \geq cr(W) \geq \ell(S(V^*) / (W^\perp + \d))$.
By Lemma~\ref{lemma: length},
$\ell(S(V^*) / (W^\perp + \d)) = \ell(A^W) - \ell(A^{W'}) = \dim \Diff(W) - \dim \Diff(W')$.
\end{proof}

For a single form, $W = \C F$, the proof would be the same, just writing $F$ and $F'$ instead of $W$ and $W'$ throughout.

\begin{remark}
The proof of the Ranestad--Schreyer bound uses a dual form $\Theta \in F^\perp$ of degree $\delta$,
and general in the linear series $(F^\perp)_{\delta}$, so that $\Theta$ does not vanish at any point of $Z$.
This requires $\delta$ to be large enough so that $(F^\perp)_{\delta}$ has no basepoints.
In this setting, $F' = \Theta F = 0$.

A general dual linear form $\d \in V^*$ also does not vanish at any point of $Z$.
Lowering the degree from $\delta$ to $1$ accounts for the improvement by a factor of $\delta$ in the above result,
compared to the Ranestad--Schreyer bound.
On the other hand, $F' = \d F \neq 0$.
\end{remark}

Now we state our main theorem.
We make essentially the same argument, except that instead of a general hyperplane we will use
a translation of a given hyperplane by a general group element.

\begin{theorem}\label{thm: invariant rank bound}
Let $G$ be a connected algebraic group and $V$ a representation of $G$.
Let $W \subseteq S^d V$ be an invariant subspace, i.e., ${}^g F \in W$ for all $F \in W$, $g \in G$.
Let $\d \in V^*$ be a nonzero element that is not contained in any proper subrepresentation.
Let $W' = \d W = \{ \d F \mid F \in W \} \subseteq S^{d-1} V$.
Then
\[
  r(W) \geq sr(W) \geq cr(W) \geq \dim \Diff(W) - \dim \Diff(W').
\]
\end{theorem}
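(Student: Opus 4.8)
The plan is to reuse the argument of Theorem~\ref{thm: general derivative} almost verbatim; the only new difficulty is that $\d$ is now prescribed rather than chosen in general position, and we compensate by translating $\d$ by a suitable group element. So, I would fix a zero-dimensional apolar scheme $Z \subseteq \PP V$ to $W$ with $\ell(Z) = cr(W) =: r$ and saturated defining ideal $I_Z \subseteq W^\perp$, and let $[\ell_1],\dotsc,[\ell_s]$ be the support points of $Z$. Exactly as in Theorem~\ref{thm: general derivative}, for any nonzero $\eta \in V^*$ with $\eta(\ell_i) \neq 0$ for all $i$, the affine hyperplane $\ker\eta \subseteq V$ contains no component of the one-dimensional cone $\wtV(I_Z)$, so $\ell(S(V^*)/(I_Z + \eta)) = r$ and hence $\ell(S(V^*)/(W^\perp + \eta)) \leq r$ since $I_Z \subseteq W^\perp$.

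The key observation is that the set of $\eta$ to avoid is exactly the union of the finitely many hyperplanes $H_i = \{\eta \in V^* : \eta(\ell_i) = 0\}$, $i = 1,\dotsc,s$, a proper closed subset of $V^*$ determined by the support of $Z$. I claim the orbit $G\d := \{{}^g\d \mid g \in G\}$ is not contained in $H_1 \cup \dotsb \cup H_s$. Indeed, the linear span of $G\d$ is a $G$-invariant subspace of $V^*$ containing $\d$, hence equals $V^*$ by hypothesis; and since $G$ is connected, $G\d$ is irreducible. An irreducible set whose linear span is $V^*$ cannot lie in a single hyperplane, and therefore cannot lie in a finite union of hyperplanes. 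So some $\eta = {}^g\d$ satisfies $\eta(\ell_i) \neq 0$ for all $i$, giving $\ell(S(V^*)/(W^\perp + {}^g\d)) \leq r$.

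Finally, I would transport this back to $\d$ itself. Since $W$, hence $W^\perp$, is $G$-invariant and $g$ acts on $S(V^*)$ as a graded algebra automorphism, $W^\perp + ({}^g\d) = {}^g(W^\perp + (\d))$; applying $g^{-1}$ yields a graded isomorphism $S(V^*)/(W^\perp + {}^g\d) \cong S(V^*)/(W^\perp + \d)$, so $\ell(S(V^*)/(W^\perp + \d)) \leq r$. Now Lemma~\ref{lemma: length} with $\Theta = \d$ (so $e = 1$) and $W' = \d W$ gives $\ell(S(V^*)/(W^\perp + \d)) = \ell(A^W) - \ell(A^{W'}) = \dim\Diff(W) - \dim\Diff(W')$. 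Combining with $r(W) \geq sr(W) \geq cr(W) = r$ finishes the proof.

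The step I expect to be the real content is the second one: guaranteeing that the orbit of the prescribed $\d$ meets the open locus needed in the Theorem~\ref{thm: general derivative} argument. This succeeds precisely because the obstruction coming from an apolar scheme is a finite union of hyperplanes, so that irreducibility together with ``span $= V^*$'' is enough to escape it; it would break down if some apolar scheme could force a higher-degree obstruction that nonetheless spans $V^*$. The first and third steps are bookkeeping with the short exact sequences of Section~\ref{section: preparation} and with the $G$-action, and should be routine.
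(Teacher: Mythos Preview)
Your proof is correct and follows essentially the same strategy as the paper's: use the group action to arrange that the hyperplane defined by the derivation misses the support of the apolar scheme, then invoke Lemma~\ref{lemma: length}. The only cosmetic difference is that the paper translates $Z$ by a general $g$ (using $W={}^{g}W$ so that ${}^{g}Z$ is still apolar to $W$), whereas you translate $\d$ to ${}^{g}\d$ and then transport back via the $G$-invariance of $W^{\perp}$; these are dual formulations of the same move, and your irreducibility-plus-span argument for escaping the finite union of hyperplanes is a clean repackaging of the paper's point-by-point open-set intersection in $G$.
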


\begin{proof}
If the orbit $G \d$ were dense in $V^*$, the result would follow immediately from Theorem~\ref{thm: general derivative},
as ${}^g \d \in V^*$ would be general for $g \in G$ general.
But if the orbit $G \d$ is closed, then a priori the orbit might completely miss the ``general'' open set
of Theorem~\ref{thm: general derivative}.
So we have to argue more directly.

Suppose $Z \subseteq \PP V$ is a zero-dimensional apolar scheme to $W$ of length $\ell(Z) = r$.
Let the support of $Z$ be $\{p_1,\dotsc,p_t\}$.
Let $H \subseteq \PP V$ be the hyperplane defined by $\d = 0$.
We claim that for general $g \in G$, ${}^g Z$ is disjoint from $H$.

The condition $\d$ is not contained in any subrepresentation is equivalent to
requiring that the orbit $G \d$ spans $V^*$.
Then for each $i=1,\dotsc,t$ there is a $g_i \in G$ such that ${}^{g_i} \d$ does not vanish at the point $p_i$;
that is, the hyperplane ${}^{g_i} H$ does not contain $p_i$.
Equivalently, ${}^{g_i^{-1}} p_i \notin H$.
So there is a nonempty open subset $U_i \subseteq G$ such that ${}^g p_i \notin H$ for $g \in U_i$.
Since $G$ is connected (hence irreducible) the intersection $U = \bigcap U_i$ is a nonempty dense open set.
This shows that for general elements $g \in G$ we have ${}^g Z \subseteq \PP V \setminus H$.

Note that $W = {}^g W \subseteq \Span({}^g Z)$ for every $g \in G$.
Now replace $Z$ with ${}^g Z$ for a general $g \in G$.
The rest of the proof is the same as the proof of Theorem~\ref{thm: general derivative}.
Explicitly, let $I_Z \subseteq W^\perp$ be the defining ideal of $Z$.
The affine scheme $\wtV(I_Z)$ is one-dimensional, has degree $r$, and has no component contained in $\wtH$.
Thus $\wtV(I_Z) \cap \wtH$ is supported only at the origin and has length equal to $r$.
Hence $\ell(S(V^*)/(W^\perp + \d)) \leq \ell(S(V^*)/(I_Z + \d)) = r$.
By Lemma~\ref{lemma: length}, $\ell(S(V^*)/(W^\perp + \d)) = \dim \Diff(W) - \dim \Diff(W')$.
\end{proof}

\begin{remark}
If $V$ is an irreducible representation of $G$,
then every nonzero $\d \in V^*$ meets the condition (of not being contained in a subrepresentation).
\end{remark}

Recall that a form $F$ is \defining{semi-invariant} if for every $g \in G$ there is a nonzero scalar $\chi(g)$
such that ${}^g F = \chi(g) F$.
Then $F$ is semi-invariant if and only if its span $W = \C F$ is an invariant subspace.
We obtain
\begin{cor}
Let $G$ be a connected algebraic group and $V$ a representation of $G$.
Let $F \in S^d V$ be a semi-invariant form.
Let $\d \in V^*$ be a nonzero element that is not contained in any proper subrepresentation.
Let $F' = \d F$.
Then
\[
  r(F) \geq sr(F) \geq cr(F) \geq \dim \Diff(F) - \dim \Diff(F').
\]
\end{cor}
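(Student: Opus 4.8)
The plan is to deduce this corollary directly from Theorem~\ref{thm: invariant rank bound} by passing from the semi-invariant form $F$ to the invariant subspace $W = \C F$. First I would recall the observation, already noted in the excerpt, that $F$ is semi-invariant precisely when its one-dimensional span $W = \C F$ is $G$-invariant: indeed ${}^g F = \chi(g) F$ says exactly that ${}^g$ maps the line $\C F$ to itself. So the hypothesis that $F$ is semi-invariant is nothing but the hypothesis of Theorem~\ref{thm: invariant rank bound} applied to $W = \C F$.

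Next I would check that the remaining hypotheses match up. The condition on $\partial \in V^*$ --- nonzero and not contained in any proper subrepresentation --- is identical in the two statements, so there is nothing to verify there. Then I would observe that with $W = \C F$ we have $W' = \partial W = \C(\partial F) = \C F'$ (this is either the line spanned by $F' = \partial F$, or the zero subspace if $\partial F = 0$; in the latter degenerate case $\dim \Diff(W') = 0 = \dim \Diff(F')$ as well). In either case $\Diff(W) = \Diff(F)$ and $\Diff(W') = \Diff(F')$, by the remark in the excerpt that for a one-dimensional $W = \C F$ one has $\Diff(W) = \Diff(F)$. Likewise $r(W) = r(F)$, $sr(W) = sr(F)$, $cr(W) = cr(F)$ for a one-dimensional series.

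Finally I would simply quote the conclusion of Theorem~\ref{thm: invariant rank bound}, namely $r(W) \geq sr(W) \geq cr(W) \geq \dim \Diff(W) - \dim \Diff(W')$, and rewrite each term using the identifications above to obtain $r(F) \geq sr(F) \geq cr(F) \geq \dim \Diff(F) - \dim \Diff(F')$, which is the desired inequality. There is no real obstacle here: the corollary is a pure specialization, and the only thing requiring a word of care is the degenerate possibility $\partial F = 0$, which is harmless since then $\dim\Diff(F')=0$ and the bound reduces to $cr(F) \geq \dim \Diff(F)$, consistent with the $W' = 0$ case of the theorem. I expect the entire argument to be two or three sentences once the dictionary between $F$ and $W = \C F$ is spelled out.
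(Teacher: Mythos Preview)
Your proposal is correct and matches the paper's approach exactly: the paper states this corollary immediately after observing that $F$ is semi-invariant if and only if $W = \C F$ is an invariant subspace, and offers no further proof, treating it as a direct specialization of Theorem~\ref{thm: invariant rank bound} to the one-dimensional linear series $W = \C F$. Your dictionary between $F$ and $W$, including the remark on the degenerate case $\partial F = 0$, is more explicit than the paper but entirely in line with its intent.
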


Here is a simple and crude lower bound.
\begin{cor}
Let $F \in S^d V$ be a nonzero homogeneous form.
Fix a basis $\{x_1,\dotsc,x_n\}$ for $V$ and the dual basis $\{\d_1,\dotsc,\d_n\}$ for $V^*$.
Let $F = F_k x_1^k + F_{k-1} x_1^{k-1} + \dotsb + F_0$,
$F_i \in \C[x_2,\dotsc,x_n]_{d-i}$.
Suppose either that $\d_1 \in V^*$ is general,
or else that $V$ is a representation of a connected group $G$, $F$ is a semi-invariant of $G$,
and $\d_1$ lies in no proper subrepresentation of $V^*$.
Then $r(F) \geq sr(F) \geq cr(F) \geq \dim \Diff(F_k)$.
\end{cor}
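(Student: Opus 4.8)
The plan is to bootstrap from the rank bounds already proved. Taking $W=\C F$, Theorem~\ref{thm: general derivative} applies in the first case (``$\d_1$ general'' meaning $\d_1$ lies in the dense open subset of $V^*$ furnished by that theorem), and Theorem~\ref{thm: invariant rank bound} applies in the second case, since the span of a semi-invariant form is an invariant subspace. Either way this gives
\[
  r(F)\ge sr(F)\ge cr(F)\ge \dim\Diff(F)-\dim\Diff(F'),\qquad F'=\d_1 F,
\]
so it suffices to show $\dim\Diff(F)-\dim\Diff(F')\ge \dim\Diff(F_k)$. By Lemma~\ref{lemma: length} with $\Theta=\d_1$, the left-hand side equals $\ell\bigl(S(V^*)/(F^\perp+(\d_1))\bigr)$. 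Since $(\d_1)\subseteq F^\perp+(\d_1)$, this quotient is $\C[\d_2,\dotsc,\d_n]/\pi(F^\perp)$, where $\pi\colon S(V^*)=\C[\d_1,\dotsc,\d_n]\to\C[\d_2,\dotsc,\d_n]$ is the surjective graded ring map $\d_1\mapsto 0$ and $\pi(F^\perp)$ is the image ideal. So the goal reduces to $\dim \C[\d_2,\dotsc,\d_n]/\pi(F^\perp)\ge \dim\Diff(F_k)$.

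The key step is the containment $\pi(F^\perp)\subseteq F_k^\perp$, where $F_k^\perp$ denotes the apolar ideal of $F_k$ computed inside $\C[\d_2,\dotsc,\d_n]$. Given $\Theta\in F^\perp$, I would write it uniquely as $\Theta=\sum_{j\ge 0}\d_1^{\,j}\Theta_j$ with $\Theta_j\in\C[\d_2,\dotsc,\d_n]$, so that $\pi(\Theta)=\Theta_0$. Expanding $0=\Theta F=\sum_{i,j}(\d_1^{\,j}x_1^{\,i})\,(\Theta_j F_i)$ and reading off the coefficient of $x_1^{\,k}$, only the term with $j=0$, $i=k$ survives (as $F_i=0$ for $i>k$), which forces $\Theta_0 F_k=0$; hence $\pi(\Theta)=\Theta_0\in F_k^\perp$. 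Because $\d_1 F_k=0$, the apolar algebra $A^{F_k}$ is exactly $\C[\d_2,\dotsc,\d_n]/F_k^\perp$, so the containment yields a graded surjection $\C[\d_2,\dotsc,\d_n]/\pi(F^\perp)\twoheadrightarrow A^{F_k}$, whence $\dim\C[\d_2,\dotsc,\d_n]/\pi(F^\perp)\ge \ell(A^{F_k})=\dim\Diff(F_k)$. Chaining this with the first paragraph completes the proof.

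The only step with any real content is the coefficient bookkeeping in the expansion of $\Theta F$: one must track how $\d_1^{\,j}$ lowers the $x_1$-degree and check that $(i,j)=(k,0)$ is the unique pair contributing to $x_1^{\,k}$. This is routine but slightly fiddly. The remaining points — the isomorphism $S(V^*)/(F^\perp+(\d_1))\cong\C[\d_2,\dotsc,\d_n]/\pi(F^\perp)$ and the identification $A^{F_k}=\C[\d_2,\dotsc,\d_n]/F_k^\perp$ — are immediate from the fact that $(\d_1)$ is contained in both of the relevant ideals, and the passage from the containment of ideals to the inequality of dimensions is the elementary observation that a quotient ring has dimension at most that of the original.
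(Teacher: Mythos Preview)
Your proof is correct and follows essentially the same approach as the paper: both reduce to the inequality $\ell\bigl(S(V^*)/(F^\perp+(\d_1))\bigr)\ge \dim\Diff(F_k)$ via the containment $F^\perp+(\d_1)\subseteq F_k^\perp$. The only difference is in how that containment is checked: you extract the $x_1^k$-coefficient of $\Theta F$ by hand, whereas the paper observes $F_k=\tfrac{1}{k!}\d_1^kF$, invokes Lemma~\ref{lemma: colon apolar ideal} to get $F_k^\perp=(F^\perp:\d_1^k)$, and then the containment $F^\perp\subseteq(F^\perp:\d_1^k)$ is immediate---a slightly cleaner route that avoids the coefficient bookkeeping.
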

\begin{proof}
Since $F_k = (1/k!) \d_1^k F$ we have $F_k^\perp = (F^\perp : \d_1^k)$
by Lemma~\ref{lemma: colon apolar ideal}.
Then clearly $F^\perp \subseteq (F^\perp : \d_1^k) = F_k^\perp$.
And since $\d_1^{k+1} \in F^\perp$, we have $(\d_1) \subseteq F_k^\perp$ as well
(or: $\d_1 \in F_k^\perp$ since $F_k$ does not involve $x_1$).
So $F^\perp + (\d_1) \subseteq F_k^\perp$,
which implies $\ell(S(V^*) / (F^\perp + \d_1)) \geq \ell(S(V^*) / F_k^\perp) = \dim \Diff(F_k)$.

Finally, we have $cr(F) \geq \ell(S(V^*) / (F^\perp + \d_1))$ in either case,
$\d_1$ is general or $F$ is a semi-invariant,
by the proofs of Theorems~\ref{thm: general derivative} and \ref{thm: invariant rank bound}.
\end{proof}
Compare this with the Bernardi--Ranestad upper bound for cactus rank in terms of dehomogenization:
\[
  \dim \Diff(F_k) \leq cr(F) \leq \dim \Diff(F_k + F_{k-1} + \dotsb + F_0).
\]

\section{Examples}\label{section: examples}

\begin{example}
Let $V^* \cong \C^{n^2}$ be the space of $n \times n$ matrices.
Let $V$ have basis $\{x_{i,j} \mid 1 \leq i, j \leq n\}$ and $V^*$ have the dual basis $\{\d_{i,j}\}$.
Let $\det_n$ be the generic $n \times n$ determinant:
\[
  \dett_n = \det \begin{pmatrix} x_{1,1} & \dotsb & x_{1,n} \\ \vdots & & \vdots \\ x_{n,1} & \dotsb & x_{n,n} \end{pmatrix} ,
\]
so $\det_n \in S^d V$ is a form of degree $n$ in $n^2$ variables.
Recall that the derivatives of $\det_n$ are spanned by minors.
This shows $\dim (S(V^*) / \det_n^\perp)_t = \binom{n}{t}^2$, so
\[
  \ell(A^{\dett_n}) = \sum_{t=0}^n \binom{n}{t}^2 = \binom{2n}{n} .
\]
As mentioned in the introduction, the Sylvester bound shows that the cactus rank and border rank of $\det_n$
are bounded below by $\binom{n}{\lfloor n/2 \rfloor}^2$.
Shafiei has shown that $\det_n^\perp$ is generated by quadrics,
so the Ranestad--Schreyer bound gives $cr(\det_n) \geq \frac{1}{2} \binom{2n}{n}$.

Now $\det_n$ is invariant under the action of $\SL_n \times \SL_n$ on $\C^{n^2}$ by left and (inverted) right matrix multiplication.
And the orbit of $\d_{1,1}$ spans $V^*$; in fact, the subgroup of permutation matrices already takes $\d_{1,1}$
to all the $\d_{i,j}$, a basis for $V^*$.
(In any case, this is an irreducible representation.)
Since $\d_{1,1} \dett_n$ is the complementary $(n-1)$-minor we have
\[
  \ell(S(V^*) / (\d_{1,1}\dett_n)^\perp) = \ell(A^{\dett_{n-1}}) = \binom{2n-2}{n-1} .
\]
Therefore by Theorem~\ref{thm: invariant rank bound},
\[
  r(\dett_n) \geq sr(\dett_n) \geq cr(\dett_n) \geq \binom{2n}{n} - \binom{2n-2}{n-1} .
\]
Note that
\[
  \binom{2n-2}{n-1} = \frac{n^2}{(2n-1)(2n)} \binom{2n}{n} \approx \frac{1}{4} \binom{2n}{n},
\]
so $\binom{2n}{n} - \binom{2n-2}{n-1} \approx \frac{3}{4} \binom{2n}{n}$.
Hence this lower bound is asymptotically $\frac{3}{2}$ times the Ranestad--Schreyer--Shafiei bound.

See Table~\ref{table: determinant bounds} for some values of this bound and a comparison to other bounds.
The upper bound for Waring rank in this table is from \cite[\textsection8]{Derksen:2013sf}.

The upper bound for cactus rank in table~1 is the Bernardi--Ranestad upper bound.
Let $f$ be the dehomogenization of $\det_n$ with respect to $x_{n,n}$.
Then $\dim \Diff(f) = \dim \Diff(\det_n) - 2$;
indeed, derivatives of $f$ are obtained by dehomogenizing the corresponding derivatives of $\det_n$,
except that $\d_{n,n} f = 0$ and $x_{n,n} \notin \Diff(f)$.
So $cr(\det_n) \leq \dim \Diff(\det_n) - 2 = \binom{2n}{n} - 2$.
\begin{table}[htb]
\begin{tabular}{l rrrrrrr}
\toprule
$n$                                     & $2$ &  $3$ &  $4$ &   $5$ &   $6$ &    $7$ &    $8$ \\
\midrule
Sylvester                               & $4$ &  $9$ & $36$ & $100$ & $400$ & $1225$ & $4900$ \\
Landsberg--Teitler                      & $4$ & $14$ & $43$ & $116$ & $420$ & $1258$ & $4939$ \\
Ranestad--Schreyer--Shafiei             & $3$ & $10$ & $35$ & $126$ & $462$ & $1716$ & $6435$ \\
Theorem~\ref{thm: invariant rank bound} & $4$ & $14$ & $50$ & $182$ & $672$ & $2508$ & $9438$ \\
\midrule
Upper bound for $cr(\det_n)$            & $4$ & $18$ &  $68$ &  $250$ &   $922$ &   $3430$ &   $12868$ \\
Upper bound for $r(\det_n)$             & $4$ & $20$ & $160$ & $1600$ & $16000$ & $224000$ & $3584000$ \\
\bottomrule
\end{tabular}
\caption{Comparison of bounds for rank of determinant}\label{table: determinant bounds}
\end{table}
\end{example}

\begin{example}
Let $X$ be a generic $(2n) \times (2n)$ skew-symmetric matrix,
that is, $X = (x_{i,j})$ such that $x_{i,j} = -x_{j,i}$ and $x_{i,i}=0$.
The Pfaffian of $X$ is a polynomial of degree $n$ in the entries of $X$, which we denote $\pf_n$ or $\pf(X)$,
with the property that $\pf_n^2 = \det(X)$.
For $n=1,2$ we have
\[
  \pf_1 = \pf \begin{pmatrix} 0 & x_{1,2} \\ -x_{1,2} & 0 \end{pmatrix} = x_{1,2},
\]
and
\[
  \pf_2 = \pf \begin{pmatrix}
    0 & x_{1,2} & x_{1,3} & x_{1,4} \\
    -x_{1,2} & 0 & x_{2,3} & x_{2,4} \\
    -x_{1,3} & -x_{2,3} & 0 & x_{3,4} \\
    -x_{1,4} & -x_{2,4} & -x_{3,4} & 0
    \end{pmatrix}
    = x_{1,2} x_{3,4} - x_{1,3}x_{2,4} + x_{1,4}x_{2,3}.
\]
In general,
\[
  \pf_n = \sum (-1)^{\sigma} x_{\sigma(1),\sigma(2)} \dotsm x_{\sigma(2n-1),\sigma(2n)} ,
\]
the sum over permutations $\sigma \in S_{2n}$ such that $\sigma(2i-1) < \sigma(2i)$ for all $i$
and $\sigma(1) < \sigma(3) < \dotsb < \sigma(2n-1)$,
equivalently over unordered partitions of $\{1,\dotsc,2n\}$ into pairs.
Note, there are $(2n-1)!! = (2n)!/(2^n n!)$ such partitions.
There is a ``Laplace expansion'': for each $j$, $1 \leq j \leq n$,
\[
  \pf(X) = \sum_{i<j} (-1)^{i+j+1} x_{i,j} \pf(X^{i,j}) + \sum_{i > j} (-1)^{i+j} x_{i,j} \pf(X^{i,j}) ,
\]
where $X^{i,j}$ is the matrix obtained by deleting the $i$th and $j$th rows and columns of $X$.
See \cite{MR0453723,MR0257105,MR554859}.

Note that $\pf_n$ is invariant under the conjugation action of $\SO_{2n}$
on the space of skew-symmetric matrices.
This is an irreducible representation of a connected group,
so Theorem~\ref{thm: invariant rank bound} applies.

By the Laplace expansion, $\d_{n-1,n} \pf_n$ is the Pfaffian of the $(2n-2) \times (2n-2)$ skew-symmetric matrix
obtained by deleting the $(2n-1)$-st and $(2n)$-th rows and columns of $X$;
we may regard $\d_{n-1,n} \pf_n = \pf_{n-1}$.

More generally, all the derivatives of $\pf_n$ are spanned by Pfaffians of even-sized
principal (i.e., skew-symmetric) submatrices of $X$.
So
\[
  \dim A^{\pf_n}_t = \binom{2n}{2t}
\]
for all $0 \leq t \leq n$.
Therefore the Sylvester bound shows that the cactus rank and border rank of $\pf_n$
are bounded below by $\binom{2n}{2 \lfloor n/2 \rfloor}$.
And
\[
  \ell(A^{\pf_n}) = \sum_{t=0}^n \binom{2n}{2t} = 2^{2n-1} .
\]
Shafiei has shown that $\pf_n^\perp$ is generated by quadrics \cite[Theorem~4.11]{Shafiei:ud},
so by the Ranestad--Schreyer bound, $cr(\pf_n) \geq 2^{2n-2}$.
Finally, Theorem~\ref{thm: invariant rank bound} gives
\[
  r(\pf_n) \geq sr(\pf_n) \geq cr(\pf_n) \geq \ell(A^{\pf_n}) - \ell(A^{\pf_{n-1}}) = 2^{2n-1} - 2^{2n-3} = 3 \cdot 2^{2n-3}.
\]
This is exactly $\frac{3}{2}$ times the Ranestad--Schreyer--Shafiei bound.

Some values of these bounds are shown in Table~\ref{table: pfaffian bounds}.
The upper bound for Waring rank in this table comes from the expression of
$\pf_n$ as a sum of $(2n)!/2^n n!$ terms each of rank $2^{n-1}$.

The upper bound for cactus rank is Shafiei's loosening of the Bernardi--Ranestad upper bound:
$cr(\pf_n) \leq \ell(A^{\pf_n}) = 2^{2n-1}$.
This can be improved by considering a dehomogenization.
(In particular, this loosening is the reason we get a worse bound for $cr(\pf_2)$ than the
bound for Waring rank $r(\pf_2)$.)

\begin{table}[htb]
\begin{tabular}{l rrrrrrr}
\toprule
$n$                                     & $2$ &  $3$ &  $4$ &   $5$ &   $6$ &    $7$ &    $8$ \\
\midrule
Sylvester                               & $6$ & $15$ & $70$ & $210$ &  $924$ & $3003$ & $12870$ \\
Ranestad--Schreyer--Shafiei             & $4$ & $16$ & $64$ & $256$ & $1024$ & $4096$ & $16384$ \\
Theorem~\ref{thm: invariant rank bound} & $6$ & $24$ & $96$ & $384$ & $1536$ & $6144$ & $24576$ \\
\midrule
Upper bound for $cr(\pf_n)$             & $8$ & $32$ & $128$ &   $512$ &   $2048$ &    $8192$ &     $32768$ \\
Upper bound for $r(\pf_n)$              & $6$ & $60$ & $840$ & $15120$ & $332640$ & $8468640$ & $259459200$ \\
\bottomrule
\end{tabular}
\caption{Comparison of bounds for rank of Pfaffian}\label{table: pfaffian bounds}
\end{table}
\end{example}

\begin{example}
Let $\sdet_n$ be the determinant of a generic symmetric matrix,
that is a matrix $X = (x_{i,j})$ such that $x_{i,j} = x_{j,i}$.
For example $\sdet_1 = x_{1,1}$,
\[
  \sdet_2 = \det \begin{pmatrix} x_{1,1} & x_{1,2} \\ x_{1,2} & x_{2,2} \end{pmatrix} = x_{1,1} x_{2,2} - x_{1,2}^2 ,
\]
and so on.

Note that $\sdet_n$ is invariant under the action of $\SL_n$
on the space of symmetric matrices given by $(A,M) \mapsto A M A^t$.
This is an irreducible representation of a connected group,
so Theorem~\ref{thm: invariant rank bound} applies.

Shafiei has shown
\[
  \dim A^{\sdet_n}_t = \frac{1}{n+1} \binom{n+1}{t} \binom{n+1}{t-1},
\]
see \cite[Lemma~2.5]{Shafiei:2013fk}.
Therefore the Sylvester bound shows that the cactus rank and border rank of $\sdet_n$
are bounded below by $\frac{1}{n+1} \binom{n+1}{\lceil n/2 \rceil} \binom{n+1}{\lceil n/2 \rceil - 1}$.
And
\[
  \ell(A^{\sdet_n}) = \frac{1}{n+2} \binom{2n+2}{n+1},
\]
the $(n+1)$-st Catalan number $C_{n+1}$, see \cite[Corollary~2.6]{Shafiei:2013fk}.
Shafiei has shown that $\sdet_n$ is generated by quadrics \cite[Theorem~3.11]{Shafiei:2013fk}.
So by the Ranestad--Schreyer bound, $cr(\sdet_n) \geq \frac{1}{2(n+2)} \binom{2n+2}{n+1}$,
see \cite[Theorem~4.6]{Shafiei:2013fk}.

Since $\d_{n,n} \sdet_n = \sdet_{n-1}$, Theorem~\ref{thm: invariant rank bound} gives
\[
  r(\sdet_n) \geq sr(\sdet_n) \geq cr(\sdet_n) \geq C_{n+1} - C_n = \frac{1}{n+2}\binom{2n+2}{n+1} - \frac{1}{n+1}\binom{2n}{n}.
\]
This is asymptotically $\frac{3}{2}$ times the Ranestad--Schreyer--Shafiei bound.

Some values of these bounds are shown in Table~\ref{table: symmetric determinant bounds}.
The upper bound for cactus rank is Shafiei's loosening of the Bernardi--Ranestad upper bound,
$cr(\sdet_n) \leq \ell(A^{\sdet_n}) = \frac{1}{n+2}\binom{2n+2}{n+1}$;
this can be improved by considering a dehomogenization.

\begin{table}[htb]
\begin{tabular}{l rrrrrrr}
\toprule
$n$                                     & $2$ &  $3$ &  $4$ &   $5$ &   $6$ &    $7$ &    $8$ \\
\midrule
Sylvester                               & $3$   &  $6$ & $20$ & $50$ & $175$   &  $490$ & $1764$ \\
Ranestad--Schreyer--Shafiei             & $2.5$ &  $7$ & $21$ & $66$ & $214.5$ &  $715$ & $2431$ \\
Theorem~\ref{thm: invariant rank bound} & $3$   &  $9$ & $28$ & $90$ & $297$   & $1001$ & $3432$ \\
\midrule
Upper bound for $cr(\sdet_n)$             & $5$ & $14$ & $42$ & $132$ & $429$ & $1430$ & $4862$ \\
\bottomrule
\end{tabular}
\caption{Comparison of bounds for rank of symmetric determinant}\label{table: symmetric determinant bounds}
\end{table}
\end{example}

\begin{example}
Let $V^*$ be the space of $m \times n$ matrices, $m \leq n$.
Fix a basis $\{x_{i,j} \mid 1 \leq i \leq m, 1 \leq j \leq n\}$ for $V$ and the dual basis $\{\d_{i,j}\}$ for $V^*$.
Let $X$ be the generic $m \times n$ matrix $X = (x_{i,j})$.
Fix an integer $d$, $1 \leq d \leq m \leq n$, and let $W_d$ be the linear series spanned by the $d$-minors of $X$.
Then $\Diff(W_d)$ is spanned by all the $t$-minors of $X$ for $0 \leq t \leq d$, so
\[
  \dim \Diff(W_d) = \sum_{t=0}^d \binom{m}{t} \binom{n}{t} = \sum_{t=0}^d \binom{m}{m-t} \binom{n}{t} .
\]
In particular if $d = m \leq n$ (the case of maximal minors) then $\dim \Diff(W_m) = \binom{m+n}{m}$.

Shafiei's results imply that $W_d^\perp$ is generated by quadrics.
The Ranestad--Schreyer lower bound thus gives
\[
  cr(W_d) \geq \frac{1}{2} \sum_{t=0}^d \binom{m}{t} \binom{n}{t},
\]
in particular if $d = m \leq n$ then $cr(W_m) \geq \frac{1}{2} \binom{m+n}{n}$.

Now, $W_d$ is invariant under the action of $\SL_m \times \SL_n$ on $V^*$ by left and (inverted) right matrix multiplication.
Note that $W' = \d_{1,1} W_d$ is exactly the linear series of $(d-1)$-minors of the $(m-1) \times (n-1)$ matrix obtained by deleting
the first row and column of $X$.
So
\[
  \dim \Diff(W') = \sum_{t=0}^{d-1} \binom{m-1}{t} \binom{n-1}{t},
\]
in particular if $d = m \leq n$ then $\dim \Diff(W') = \binom{m+n-2}{m-1}$.
Thus
\[
  cr(W_d) \geq \sum_{t=0}^d \left(\binom{m}{t} \binom{n}{t} - \binom{m-1}{t-1} \binom{n-1}{t-1} \right)
\]
in general
(with the understanding $\binom{m-1}{-1} = \binom{n-1}{-1} = 0$),
and for maximal minors, if $d = m \leq n$ then
\[
  cr(W_m) \geq \binom{m+n}{m} - \binom{m+n-2}{m-1} .
\]
\end{example}

\begin{example}
Let $V_1\cong \C^{pq}$ be the space of $p\times q$ matrices, $V_2\cong \C^{qr}$ be the space of $q\times r$ matrices,
and $V_3 \cong \C^{rp}$ be the space of $r \times p$ matrices.
We choose a basis $\{x_{i,j}\}_{1\leq i\leq p,1\leq j\leq q}$ of $V_1$, a basis $\{y_{i,j}\}_{1\leq i\leq q,1\leq j\leq r}$ of $V_2$,
and a basis $\{z_{i,j}\}_{1 \leq i \leq r, 1 \leq j \leq p}$ of $V_3$.
Consider the tensor
\[
  T_{p,q,r}=\sum_{i,j,k}x_{i,j}y_{j,k}z_{k,i}\in \C^{pq}\otimes \C^{qr}\otimes \C^{rp}.
\]
This represents the \emph{matrix multiplication} map $V_1 \otimes V_2 \to V_3$.
We will give a lower bound for the tensor rank of $T_{p,q,r}$.

Set $V=V_1\oplus V_2$.
Let $W\subseteq S^2V$ be the space spanned by all $\sum_{j=1}^q x_{i,j}y_{j,k}$ with $1\leq i\leq p$ and $1\leq k\leq r$.
Note that these are the entries of the $p \times r$ matrix obtained by multiplying the matrices
$(x_{i,j})_{1 \leq i \leq p, 1 \leq j \leq q}$ and $(y_{i,j})_{1 \leq i \leq q, 1 \leq j \leq r}$.

The tensor rank $tr(T_{p,q,r})$ of the tensor $T_{p,q,r}$ is the smallest number $m$
for which there exist pure tensors $f_i(x,y)=g_i(x)h_i(y)\in V_1\otimes V_2$, $i=1,2,\dots,m$
such that $W$ is contained in the span of $f_1(x,y),\dots,f_m(x,y)$ \cite[Thm.~3.1.1.1]{MR2865915}.
We can write $f_i(x,y)=\frac{1}{4}((g_i(x)+h_i(y))^2-(g_i(x)-h_i(y))^2)$, so $tr(T_{p,q,r})\geq \frac{1}{2}r(W)$.
On the other hand, if $W$ is contained in the space spanned by $u_1(x,y)^2,\dots,u_m(x,y)^2$ where $u_1,\dots,u_m$ are linear, then we can write $u_i(x)=v_i(x)+w_i(y)$
and $W$ is contained in the span of $v_i(x)w_i(y)$, $i=1,2,\dots,m$ because $W$ consists of forms that are bilinear in $x$ and $y$. This shows that $tr(T_{p,q,r})\leq r(W)$.
We conclude that
\[
  \textstyle \frac{1}{2}r(W)\leq tr(T_{p,q,r})\leq r(W).
\]

There is a natural action of $G=\GL_p\times \GL_q\times \GL_r$ on $V$.
The space $W$ is invariant under the action of $G$.
Let $\partial=\frac{\partial}{\partial x_{1,1}}+\frac{\partial}{\partial y_{1,1}}\in V^*$. The space $V^*\cong V_1^*\oplus V_2^*$ is the sum of two non-isomorphic irreducible representations of $G$.
The only proper invariant subspaces of $V^*$ are $V_1^*$ and $V_2^*$ and $\partial$ does not lie in either of them.
The space $\Diff(W)$ is spanned by $W$, all $x_{i,j}$, all $y_{i,j}$, and $\C$.
The dimension of $W$ is $pr$ while the span of all $x_{i,j}$ and all $y_{i,j}$ has dimension $pq+qr$.
So the dimension of $\Diff(W)$ is $pq+qr+pr+1$.
The space $W'$ is spanned by all $y_{1,k}$ for $k>1$, $x_{i,1}$ for $i>1$ and $x_{1,1}+y_{1,1}$.
The dimension of $W'$ is $(p-1)+(r-1)+1=p+r-1$, and the dimension of $\Diff(W')$ is  $p+r$.
We have 
\begin{multline*}
r(W)\geq sr(W)\geq cr(W)\geq \dim \Diff(W)-\dim \Diff(W')=\\ =(pq+qr+pr+1)-(p+r)=pq+qr+pr-p-r+1.
\end{multline*}

It follows that
\[
  tr(T_{p,q,r})\geq\textstyle \frac{1}{2}(pq+qr+pr-p-r+1).
\]
If $p=q=r=n$ then we get $r(W)\geq 3n^2-2n+1$. It follows that
\[
  tr(T_{n,n,n})\geq \textstyle \frac{3}{2}n^2-n+\frac{1}{2}.
\]
This is a nontrivial lower bound for the tensor rank of $T_{n,n,n}$, but there are better bounds known: Recently, Massarenti and Raviolo (\cite{MR,MR2,MR3}) improved a
lower bound of Landsberg (\cite{Landsberg2}). Both lower bounds grow asymptotically as $3n^2-o(n^2)$.
From these bounds we also get a lower bound
for $r(W)$, namely $r(W)\geq 3n^2-o(n^2)$, which is slightly worse than our bound.
\end{example}

\bibliographystyle{amsplain}
\bibliography{../../biblio}

\bigskip

\end{document}